\definecolor{myurlcolor}{rgb}{0,0,0.7}
\newtheorem{rmq}{Remark}[section]
\newtheorem{dfn}{Definition}[section]
\newtheorem{lem}{Lemma}[section]
\newtheorem{thm}{Theorem}[section]
\newcommand{\bprof}{\begin{prof}}
\newcommand{\eprof}{\end{prof}}
\newenvironment{prof}[1][Proof]{\textbf{#1.} }{\ \rule{0.5em}{0.5em}}
\newcommand{\bea}{\begin{eqnarray}}
\newcommand{\eea}{\end{eqnarray}}
\newcommand{\beq}{\begin{equation}}
\newcommand{\eeq}{\end{equation}}
\newcommand{\enn}{\nonumber \end{equation}}
\newcommand{\beqs}{\begin{eqnarray*}}
\newcommand{\eeqs}{\end{eqnarray*}}
 \newcommand{\cE}{\mathcal{E}}
\newcommand{\cT}{\mathcal{T}}
 \newcommand{\R}{\mathbb{R}}
\def\cN{{\mathcal N}}
\title[RESIDUAL-BASED A POSTERIORI ERROR
ESTIMATES]
{RESIDUAL-BASED A POSTERIORI ERROR
	ESTIMATES FOR A CONFORMING MIXED FINITE ELEMENT
	DISCRETIZATION OF THE MONGE-AMP\`ERE EQUATION}
\author{Adetola Jamal $^{(a)}$}
\email{a) adetolajamal58@yahoo.com}
\address{Institut de Math\'ematiques et de Sciences Physiques (IMSP),
	Universit\'e d'Abomey-Calavi (UAC), Rep. of Benin}
\author{Hou\'edanou Koffi Wilfrid $^{(b)}$}
\email{b) khouedanou@yahoo.fr}
\address{
	D\'epartement de Math\'ematiques,
	Universit\'e d'Abomey-Calavi (UAC), Rep. of Benin
}
\author{Bernardin Ahounou $^{(c)}$}
\email{c) bahounou@yahoo.fr}
\address{
	D\'epartement de Math\'ematiques,
	Universit\'e d'Abomey-Calavi (UAC), Rep. of Benin
}
\begin{document}

\maketitle
\begin{Large}
\begin{abstract}\Large
	In this paper we develop a new a posteriori error analysis 
	for the Monge-Amp\`ere equation 
	approximated by conforming finite element method on 
	isotropic meshes in $\mathbb{R}^2$. The approach utilizes a slight variant of the 
	mixed discretization proposed by  Gerard Awanou and 
	Hengguang Li in \cite{GL:2014}.
	The a posteriori error estimate is based on a suitable evaluation on the residual of the finite element
	solution. It is proven that the a posteriori error estimate provided in this paper is both reliable and efficient.
	\\
	\small{\bf Mathematics Subject Classification [MSC]:} 74S05,74S10,74S15,
	74S20,74S25,74S30.\\
	{\bf Key Words:} Monge-Amp\`ere equation; conforming finite element method; A posteriori error analysis.
\end{abstract}

\tableofcontents
\section{General introduction}
The adaptive techniques have become indispensable tools
and unavoidable in the field of study behavior of the error committed during solving partial differential equations 
(PDE). 
A posteriori error estimators are computable quantities, expressed in terms of the discrete solution  and of the data that measure the actual discrete errors without the knowledge of the exact solution. They are essential to design adaptive mesh  refinement  algorithms  which equi-distribute the computational effort and optimize the approximation efficiency. 
Since the pioneering work of Babuska and Rheinboldt \cite{BR78b,BR78a},   adaptive finite element methods based on a posteriori error estimates have been extensively investigated. 
Several a-posteriori error analysis methods for  PDE have been developped in 
the last five decades \cite{TS:2001, SCS:1999, Adams:1978, AO:1997, Babuska:1978', BR:1993,BHV:2009, Brezis:1983, BRR:1980, ZZ:1988,46,47,AHN:15,HA:2016}.

We consider the Monge-Amp\`ere equation on a convex domain of $\mathbb{R}^2$ with a smooth solution and 
our approach utilizes a slight variant of the 
mixed discretization proposed by  Gerard Awanou and 
Hengguang Li in \cite{GL:2014}.
The  purpose of these work is to determine to which extend the general framework for 
adaptivity for nonlinear problems of Gatica and his collaborators  in \cite{CGOS:2015,47}
can be applied to the Monge-Amp\`ere equation.
More precisely, we attempt to determine to which extent one can prove results analogous 
to the ones of Hou\'edanou, Adetola and Ahounou \cite{HJA:2017}. Ideed, in \cite{GL:2014} Gerard Awanou and 
Li study the mixed method for this equation and they gave a priori error estimator under the assumption
of regularity for the solution of continuous problem. 
Omar Lakkis in his presentation of July 15, 2014, presents a family of reliable error indicators for a primal formulation \cite{OmarLakkis:2014}.
However, to our best knowledge, they din't talk about adaptative method
for this mixed formulation. In this case we have for main to give a posteriori error analysis  by 
constucting  reliable and efficiency indicator error.

In \cite{GL:2014} Gerard and Li
have  introduce  a mixed finite element method formulation for the  elliptic Monge-Amp\`ere equation by
puting $\sigma = D^2u$. The news unknowns in the formulation are
$u$ and $\sigma$ which been  approached respectively by the discrete  polynomials spaces of Lagrange.
Finally, they give a result of error priori analysis with some numerical tests confirming the convergence rates.
In this paper, we have got a new family of a local indicator error  $\Theta_{K}$ (see Definition \ref{definitionindicator}, eq. \ref{localindicator}) and global $\Theta$ (eq. \ref{globalindicator}) efficiency and reliability for
the mixed method  of  Monge-Amp\`ere model. 
 We prove that our indicators error 
are efficiency and reliability, and then, are optimal.  The global inf-sup condition is the main tool yielding the reliability. In turn,
The local efficiency result is derived using the technique of bubble 
function introduced by R. Verf\"{u}rth \cite{40} and used in similar context by 
C. Carstensen \cite{25,ca:97}.

The   paper  is organized as follows.
Some  preliminaries and  notation are given in  section  \ref{sec:R1}. 
In section \ref{sec:R2}, the a posteriori error estimates are derived. We offer our conclusion and the further works in Section \ref{summary}.

\section{Preliminaries and Notation}\label{sec:R1}
\subsection{Model}
Let $\Omega$ be a convex polygonal  domain
of $\R^2$  with boundary $\partial \Omega$. We consider the following problem : find the unique 
strictly convex $C^3(\overline{\Omega})$ solution $u$ (when it exists) of 
\begin{align} \label{m1}
\begin{split}
\det (D^2 u) & = f \, \text{in} \, \Omega \\
u & = g \, \text{on} \, \partial \Omega.
\end{split}
\end{align}
The given function $f \in C^1(\overline{\Omega})$  is assumed to satisfy $f  >0$
and the function $g\in C(\partial \Omega)$ is also given and
assumed to extend to a $C^3(\overline{\Omega})$ function. Here $\det (D^2 u)$ denotes the determinant of the Hessian matrix
$D^2 u=\big( \partial^2 u/(\partial x_i \partial x_j)\big)_{i,j=1, 2} $.
\subsection{Modified continuous mixed weak formulation}
We begin this subsection by introducing some useful notations. If $W$ is a bounded domain of $\mathbb{R}^2$ and $m$ 
is a non negative integer, the Sobolev space $H^m(W)=W^{m,2}(W)$ is 
defined in the usual way with the usual norm $\parallel\cdot\parallel_{m,W}$ and semi-norm $|.|_{m,W}$. In particular, 
$H^0(W)=L^2(W)$ and we write $\parallel\cdot\parallel_W$ for $\parallel\cdot\parallel_{0,W}$.
Similarly we   denote by
$(\cdot,\cdot)_{W}$  the $L^2(W)$, $[L^2(W)]^2$ or $ [L^2(W)]^{2\times 2}$ inner product.
Now, we recall the continuous mixed weak formulation introduce by G\'erard et al. \cite{GL:2014}.
The mixed  weak formulation of \eqref{m1}  is : find $(\sigma,u) \in  H^1(\Omega)^{2 \times 2}\times H^2(\Omega)$ 
such that
\begin{equation} 
\left \{
\begin {array}{ccl}\label{m10}
(\sigma,\mu)_{\Omega} + (\nabla\cdot\mu, D u)_{\Omega} - < D u, \mu n >_{\partial \Omega} & =& 0,
\,\, \forall \mu \in H^1(\Omega)^{2 \times 2} \\ \\
(\det \sigma,v) & = &(f,v)_{\Omega}, \,\, \forall v \in H_0^1(\Omega)\\ \\
u & =& g \,\, \text{on} \, \partial \Omega.
\end{array}
\right.      
\end{equation}

Let introduce Lagrange multiplier $\lambda= u|_{\partial \Omega}$, 
the  modified mixed  weak formulation of \eqref{m1}  is : find $(\sigma,u, \lambda) \in 
H^1 (\Omega)^{2 \times 2}\times H^2(\Omega) \times  H^{1/2}(\partial \Omega) $ 
such that
\begin{equation} 
\left \{
\begin {array}{ccl}\label{m100}
(\sigma,\mu)_{\Omega} + (\nabla\cdot \mu, D u)_{\Omega} - \left<D u, \mu n \right>_{\partial \Omega} & =& 0,
\,\, \forall \mu \in H^1(\Omega)^{2 \times 2} \\ \\
(\det \sigma,v)_{\Omega} & = &(f,v)_{\Omega}, \,\, \forall v \in H_0^1(\Omega)\\ \\
(\lambda, \mu)_{\partial \Omega} & =& (g, \mu)_{\partial \Omega} \,\, \forall \mu \in H^{-1/2}( \partial \Omega).
\end{array}
\right.      
\end{equation}
\begin{lem} (ref. \cite{GL:2014})
	The problem  \eqref{m10} is well defined, and  if $u$ is a smooth solution 
	of \eqref{m1}, then $(u, D^2 u)$ solves \eqref{m10}.
\end{lem}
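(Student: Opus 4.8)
The plan is to verify the two assertions of the lemma in turn: first that the mixed problem \eqref{m10} is well defined, and then that $(D^2u, u)$ is indeed a solution whenever $u$ is a smooth (strictly convex $C^3$) solution of \eqref{m1}. Since the claim is attributed to \cite{GL:2014}, the proof is essentially an unwinding of definitions together with an integration-by-parts identity, so I would keep it short.

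\textbf{Step 1: Well-posedness of the weak forms.} First I would check that each term appearing in \eqref{m10} makes sense on the stated spaces. For $\sigma, \mu \in H^1(\Omega)^{2\times2}$ the term $(\sigma,\mu)_\Omega$ is the usual $L^2$ inner product; $\nabla\cdot\mu \in L^2(\Omega)^2$ and, for $u \in H^2(\Omega)$, $Du \in H^1(\Omega)^2 \subset L^2(\Omega)^2$, so $(\nabla\cdot\mu, Du)_\Omega$ is finite; the boundary term $\langle Du, \mu n\rangle_{\partial\Omega}$ is interpreted via the trace theorem, with $Du|_{\partial\Omega} \in H^{1/2}(\partial\Omega)^2$ and $\mu n|_{\partial\Omega} \in H^{1/2}(\partial\Omega)^2 \subset L^2(\partial\Omega)^2$, hence the duality pairing is well defined. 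For the second equation, strict convexity of $u$ together with $u \in C^3(\overline\Omega)$ (or more generally $\sigma \in L^\infty$) gives $\det\sigma \in L^2(\Omega)$, so $(\det\sigma, v)_\Omega$ is finite for $v \in H^1_0(\Omega)$. Thus every form in \eqref{m10} is well defined; I would remark that here ``well defined'' refers to the meaningfulness of the variational identities rather than to existence/uniqueness of a solution (the latter being known from the PDE theory for \eqref{m1} under the stated hypotheses on $f$ and $g$).

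\textbf{Step 2: A smooth solution solves the mixed system.} Let $u \in C^3(\overline\Omega)$ be strictly convex with $\det(D^2u)=f$ in $\Omega$ and $u=g$ on $\partial\Omega$, and set $\sigma := D^2u \in C^1(\overline\Omega)^{2\times2} \subset H^1(\Omega)^{2\times2}$. The third equation of \eqref{m10} is immediate since $u=g$ on $\partial\Omega$. The second equation follows directly: $\det\sigma = \det(D^2u) = f$ pointwise, so $(\det\sigma, v)_\Omega = (f,v)_\Omega$ for all $v$. For the first equation, I would start from the elementary identity, valid componentwise for smooth fields, obtained by integrating by parts: for each $i$,
\[
\int_\Omega \sum_j \partial_j \mu_{ij}\, \partial_i u \, dx = -\int_\Omega \sum_j \mu_{ij}\, \partial_{ij} u\, dx + \int_{\partial\Omega} \sum_j \mu_{ij} n_j\, \partial_i u\, ds.
\]
Summing over $i$ and recognizing $\partial_{ij}u = \sigma_{ij}$, $(\nabla\cdot\mu)_i = \sum_j \partial_j\mu_{ij}$, and $(\mu n)_i = \sum_j \mu_{ij}n_j$, this reads $(\nabla\cdot\mu, Du)_\Omega = -(\sigma,\mu)_\Omega + \langle Du, \mu n\rangle_{\partial\Omega}$, i.e. exactly the first equation of \eqref{m10}. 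By density of $C^1(\overline\Omega)^{2\times2}$ in $H^1(\Omega)^{2\times2}$ and continuity of all the forms (Step 1), the identity extends to all $\mu \in H^1(\Omega)^{2\times2}$. Hence $(\sigma, u) = (D^2u, u)$ solves \eqref{m10}.

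\textbf{Main obstacle.} There is no deep obstacle here; the only point requiring a little care is the rigorous justification of the boundary duality pairing $\langle Du, \mu n\rangle_{\partial\Omega}$ and the passage from smooth $\mu$ to general $\mu \in H^1(\Omega)^{2\times2}$ — one must make sure the trace of $\mu n$ lies in the correct space and that the pairing is continuous with respect to the $H^1$-norm, which follows from the standard trace theorem $H^1(\Omega) \hookrightarrow H^{1/2}(\partial\Omega)$. Everything else is a direct computation, and for the statement as given I would simply cite \cite{GL:2014} for the details while recording the integration-by-parts identity above.
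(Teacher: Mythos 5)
Your proof is correct and is exactly the standard verification one would expect: the paper itself gives no proof of this lemma, merely citing \cite{GL:2014}, and your two steps (checking each form is meaningful on the stated spaces, then integrating by parts row-wise and using $\det D^2u=f$ pointwise) are the intended argument. One small improvement: for the well-definedness of $(\det\sigma,v)_\Omega$ on the \emph{whole} space $H^1(\Omega)^{2\times2}$ (not just at the smooth solution), invoke the two-dimensional Sobolev embedding $H^1(\Omega)\hookrightarrow L^4(\Omega)$, which gives $\det\sigma\in L^2(\Omega)$ for every $\sigma\in H^1(\Omega)^{2\times2}$ rather than relying on $\sigma\in L^\infty$.
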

\begin{rmq}
	In this formulation the boundary condition $u=g$ on $\partial \Omega$ viewed as constraints and
	imposed via Lagrange multiplier.
\end{rmq}

We end this section with some notation. Let $\mathbb{P}_k$ be the space of polynomials of total degree not larger than $k$. In order to avoid excessive use of constants, the abbreviations $x\lesssim y$ and $x\sim y$ stand for $x\leq cy$ and $c_1x\leq y\leq c_2x$, respectively, with positive constants independent of $x$, $y$ or $\cT_h$ (meshes). 
\subsection{Modified discrete formulation}
Let $\Omega$ be an open convex bounded subset of $\mathbb{R}^2$ with boundary $\partial \Omega$ and let 
$\cT_h$ denote a triangulation of $\Omega$ into simplices $K$. We denote by $h_K$ the diameter of the element $K$ and $h=\displaystyle\max_{K\in\cT_h} h_K$. We make the assumption that the triangulation is conforming and satisfies the usual shape regularity condition, i.e. there exists a constant $\sigma> 0$ suth that:
 $\frac{h_K}{\rho_K}\leq \sigma,$ for all $K\in \cT_h$ where $\rho_K$ denotes the radius of the  largest ball inside $K$. (See Figs. \ref{isotropic}, \ref{adm1},\ref{adm2}).
 \begin{figure}[http]
 	\begin{minipage}[c]{.30\textwidth}
 		\centering
 		\begin{center}
 			\begin{tikzpicture}[scale=0.5]
 			\draw (0,1)--(7,1);
 			\draw (0,1)--(2,4.5);
 			\draw (7,1)--(2,4.5);
 			\draw [line width=0.75pt] [>=latex,<->](0,0.75)--(7,0.75)node [midway,below,sloped] {$\mbox{diam} (K)=h_K$};
 			\draw  (2.45,2.43) circle (1.4);
 			\draw (2.45,2.) node [above]{$\bullet$};
 			\draw (3,1.5) node [above]{\small{$\rho_K$}};
 			\draw  [line width=0.75pt](2.45,2.43)--(2.5,1);
 			\end{tikzpicture}
 		\end{center}
 		\caption{\footnotesize{\small\small{Isotropic element $K$ in $\mathbb{R}^2$.}}}
 		\label{isotropic}
 	\end{minipage}
 	\begin{minipage}[c]{.30\textwidth}
 		\centering
 		\begin{center}
 			\begin{tikzpicture}[scale=0.5]
 			\draw (0,3)--(0,-3);
 			\draw (0,3)--(2,0);
 			\draw (0,3)--(-2,0);
 			\draw (-2,0)--(2,0);
 			\draw (2,0)--(0,-3);
 			\draw (-2,0)--(0,-3);
 			\end{tikzpicture}
 		\end{center}
 		\caption{\footnotesize{\small\small Example of conforming mesh in $\mathbb{R}^2$}}
 		\label{adm1}
 	\end{minipage}
 	\begin{minipage}[c]{.30\textwidth}
 		\centering
 		\begin{center}
 			\begin{tikzpicture}[scale=0.5]
 			\draw (0,3)--(0,0);
 			\draw (0,3)--(2,0);
 			\draw (0,3)--(-2,0);
 			\draw (-2,0)--(2,0);
 			\draw (2,0)--(0,-3);
 			\draw (-2,0)--(0,-3);
 			\draw (0,0)node {$\bullet$};
 			\end{tikzpicture}
 		\end{center}
 		\caption{\footnotesize{\small\small Example of nonconforming mesh in $\mathbb{R}^2$ }}
 		\label{adm2}
 	\end{minipage}
 \end{figure}

 To be able to use global inverse estimates, c.f. (2.2) and $(2.3)$ of \cite{GL:2014}, we require the triangulation to be also quasi-uniform, i.e. there is a constant $C> 0$ such that $h\leq Ch_K$ for all $K\in\cT_h$. 
 
 For any $K\in \mathcal{T}_h$, we denote by $\cE_h (K)$ (resp. $\cN_h(K))$
 the set of its edges (resp. vertices)  and set 
 $\cE_h=\displaystyle\bigcup_{K\in\mathcal{T}_h} \cE(K)$, $\cN_h=\displaystyle\bigcup_{K\in\mathcal{T}_h} \cN(K)$. 
 For $\mathcal{A}\subset \overline{\Omega}$ we define: 
 $$
 \cE_h(\mathcal{A})=\left\{ E\in\cE_h: E\subset \mathcal{A}\right\}.
 $$
 
 Let $V_h$ denote the standard Lagrange finite element space of degree $k\geq 3$ and $\Sigma_h=V_h^{2\times 2}$; that is we consider the following discret spaces :
 \begin{eqnarray}
 V_h := \{  v_h \in C^0(\overline{\Omega}) : v_{|K} \in \mathbb{P}_k(K) \quad \forall K \in \mathcal{T}_h, \quad k\ge 3\},
 \end{eqnarray}
 \begin{eqnarray}
 \Sigma_h := \{  \tau_h \in \left[C^0(\overline{\Omega})\right]^{2\times 2} 
 : \tau_{|K} \in [\mathbb{P}_k(K)]^{2\times 2} \quad \forall K \in \mathcal{T}_h, \quad k\ge 3\}
 \end{eqnarray}
 The discrete  formulation of \eqref{m10} is given by :
 find $(u_h,\sigma_h) \in V_h\times \Sigma_h$ such that
 \begin{equation}\label{m11}
 \left \{
 \begin {array}{ccl}
 (\sigma_h , \tau_h)+ (\nabla\cdot \tau_h, Du_h)-\langle Du_h , \tau_h n \rangle&=&0
 \quad \forall \tau_h \in  \Sigma_h \\ \\
 (det\, \sigma_h , v_h) &=&(f,v_h) \quad \forall v_h \in V_h    \\ \\
 u_h &=& g_h \quad  \textrm{on}\quad  \partial{\Omega},  
 \end{array}
 \right.
 \end{equation}

  We recall that $H_0^1(\Omega)$ is the subset of $H^1(\Omega)$ of elements with vanishing trace on $\partial \Omega$. Let $I_h$ denote the standard Lagrangian interpolation operator from $C^0(\partial\Omega)$ into the space $L_h:=\{{v_h}_{|\partial\Omega}: v_h\in V_h\}$. 
The modified discrete formulation of \eqref{m100} is given by  find : For $\rho>0$, we difine by :
$(\sigma_h,u_h,\lambda_h) \in V_h\times \Sigma_h \times L_h$ such that
\begin{equation}\label{m111}
\left \{
\begin {array}{ccl}
(\sigma_h , \tau_h)_{\Omega}+ (\nabla\cdot \tau_h, Du_h)_{\Omega}-\langle Du_h , \tau_h n \rangle_{\partial \Omega}&=&0
\quad \forall \tau_h \in  \Sigma_h \\ \\
(det\, \sigma_h , v_h)_{\Omega} &=&(f,v_h)_{\Omega} \quad \forall v_h \in V_h    \\ \\
\langle \lambda_h, \mu\rangle_{\partial \Omega} &=& \langle g_h, \mu \rangle_{\partial \Omega} \quad  \forall \mu \in L_h,
\end{array}
\right.
\end{equation}
with $$g_h=I_hg.$$ 
 Now, we define
\begin{equation*}
\bar{B}_h(\rho) =\left \{ (\eta_h,w_h) \in \Sigma_h\times V_h, \|w_h-I_hu\|_{H^1} \le \rho,
\|\eta_h-I_h\sigma\|_{L^2} \le h^{-1}\rho 
\right \}
\end{equation*}
and
\begin{equation*}
{B}_h(\rho) =  \bar{B}_h(\rho) \cap Z_h,
\end{equation*}
where 
\begin{align*}
Z_h:=\{ (w_h,\eta_h) \in H_h\times Q_h, w_h=g_h \,\, \mbox{on}\,\, \partial \Omega,\\
(\eta_h,\tau_h)+(\nabla\cdot \tau_h, Dw_h)-\langle Dw_h,\tau_h n\rangle =0\quad \forall \tau_h\in Q_h\}.
\end{align*}
By simple calcultions, the problem  \eqref{m111} is logically equivalent to \eqref{m11} and we have the result:
\begin{lem} (cf. \cite{GL:2014})
	The problem  \eqref{m111} as unique  solution in ${B}_h(\rho)$.
\end{lem}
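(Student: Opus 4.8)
\begin{prof}[Proof proposal]
The plan is to recast \eqref{m111}, which by the preceding discussion is logically equivalent to \eqref{m11} posed on the affine constraint set $Z_h$, as a fixed-point problem on the ball $B_h(\rho)$ and then invoke the Banach fixed-point theorem for $\rho$ (and $h$) sufficiently small. The only nonlinearity is the term $(\det\sigma_h,v_h)$, so I would linearize the determinant about the Lagrange interpolant $I_h\sigma$ of the exact Hessian. Here one uses the elementary identity, valid for $2\times2$ matrices, $\det A-\det B=\mathrm{cof}\!\big(\tfrac12(A+B)\big):(A-B)$, together with the fact that, since $u$ is strictly convex and $C^3$, the cofactor matrix $\mathrm{cof}(D^2u)$ is uniformly positive definite on $\overline{\Omega}$; hence $\mathrm{cof}(\eta_h)$ remains positive definite for every $(\eta_h,w_h)\in B_h(\rho)$ once $\rho$ and $h$ are small (this step already requires the inverse estimate $\|\cdot\|_{L^\infty}\lesssim h^{-1}\|\cdot\|_{L^2}$ to pass from the $L^2$-closeness built into $\bar B_h(\rho)$ to pointwise control of $\eta_h$).

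First I would introduce the solution map $\mathcal{S}\colon B_h(\rho)\to\Sigma_h\times V_h$, where $(\sigma_h,u_h)=\mathcal{S}(\eta_h,w_h)$ solves the \emph{linear} saddle-point system obtained from \eqref{m11} by freezing the coefficient, i.e. replacing $(\det\sigma_h,v_h)$ with $(\mathrm{cof}(\eta_h):\sigma_h,v_h)$, while keeping the first equation and the essential condition $u_h=g_h$ on $\partial\Omega$ unchanged. The crucial point is the well-posedness and stability of this linear problem: its inf-sup structure combines coercivity on the discrete kernel, inherited from the positive definiteness of $\mathrm{cof}(\eta_h)$, with the Brezzi condition for the form $(\nabla\cdot\tau_h,Dw_h)-\langle Dw_h,\tau_h n\rangle$ already established in \cite{GL:2014}. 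The global inverse inequalities granted by the quasi-uniformity of $\cT_h$ (cf. (2.2)--(2.3) of \cite{GL:2014}) are what allow the stability constants to be tracked consistently with the $h^{-1}$ weight relating the $L^2$-bound on the Hessian variable to the $H^1$-bound on the potential in the definition of $\bar B_h(\rho)$.

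Next I would verify that $\mathcal{S}$ maps $B_h(\rho)$ into itself and is a contraction there. For the self-mapping property one uses consistency: $(I_hu,I_h\sigma)$ satisfies \eqref{m11} up to interpolation residuals, so $\mathcal{S}(\eta_h,w_h)-(I_hu,I_h\sigma)$ is bounded, via the stability of the linear problem, by those residuals plus the linearization defect $\det\eta_h-\mathrm{cof}(\eta_h):I_h\sigma$, which is quadratically small in $\|\eta_h-I_h\sigma\|$ and hence absorbed into the radius for $\rho$ small. For the contraction estimate one subtracts the linear systems defining $\mathcal{S}(\eta_h^1,w_h^1)$ and $\mathcal{S}(\eta_h^2,w_h^2)$, applies the same stability bound, and controls $\big(\mathrm{cof}(\eta_h^1)-\mathrm{cof}(\eta_h^2)\big):\sigma_h$ by $\|\eta_h^1-\eta_h^2\|$ times the (now bounded) magnitude of $\sigma_h$, again invoking an inverse estimate on the norms; choosing $\rho$ small forces the resulting Lipschitz constant below $1$. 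The Banach fixed-point theorem then gives the unique $(\sigma_h,u_h)\in B_h(\rho)$, and setting $\lambda_h:=I_hg={u_h}|_{\partial\Omega}$ in $L_h$ recovers the unique solution of \eqref{m111}.

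The principal obstacle is the second step: establishing the discrete stability of the linearized saddle-point system \emph{uniformly in $h$} while simultaneously keeping track of the mesh-dependent scaling between the potential norm and the Hessian norm encoded in $\bar B_h(\rho)$. This is exactly where the quasi-uniformity assumption and the inverse inequalities of \cite{GL:2014} are indispensable; once that stability estimate is secured, the fixed-point argument and the passage back to \eqref{m111} are routine.
\end{prof}
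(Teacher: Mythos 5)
The paper offers no proof of this lemma at all---it simply defers to \cite{GL:2014}---and your sketch (linearize the determinant via the $2\times 2$ cofactor identity, solve the frozen linear saddle-point problem, and run a Banach contraction on $B_h(\rho)$ using the quasi-uniformity and inverse estimates) is exactly the argument of that reference, so your reconstruction is essentially the intended proof. The only sketch-level imprecision is in the frozen equation: since $\mathrm{cof}(\eta_h):\eta_h=2\det\eta_h$ in two dimensions, the consistent linearization reads $(\mathrm{cof}(\eta_h):\sigma_h,v_h)=(f+\det\eta_h,v_h)$ rather than $(f,v_h)$, and correspondingly the linearization defect you absorb into the radius should be the genuinely quadratic remainder $\det\eta_h-\det I_h\sigma-\mathrm{cof}(I_h\sigma):(\eta_h-I_h\sigma)$.
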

\begin{thm}\cite{GL:2014}
	Let $ (u,\sigma) \in  H^{k+1}(\Omega) \times H^{k}(\Omega)^{2\times 2}$ be  the convex solution 
	of non linear problem \eqref{m10} with $k\ge 3$. The discrete non linear problem  \eqref{m11}
	has a unique solution $(u_h,\sigma_h)$ in $\mathcal{B}_h(\rho) \subset V_h \times \Sigma_h$.
	Moreover the following estimate holds.  
	\begin{eqnarray}
	\| u-u_h\|_{H^1} \leq Ch^k. 
	\end{eqnarray}
	\begin{equation}
	\| \sigma-\sigma_h\|_{L^2} \leq  C h^{k-1}.
	\end{equation}
\end{thm}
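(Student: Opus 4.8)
The plan is to realize the discrete nonlinear system \eqref{m11} as a fixed point of a suitable linearized map on the closed ball $\mathcal{B}_h(\rho)$ with $\rho\sim h^k$, and to conclude by the Banach fixed-point theorem; uniqueness in $\mathcal{B}_h(\rho)$ is then automatic, and the two error bounds follow from the triangle inequality together with standard Lagrange interpolation estimates. The structural fact that makes this work is that in two dimensions $\tau\mapsto\det\tau$ is a quadratic form, so that the \emph{exact} identity $\det(\sigma+\tau)=\det\sigma+\mathrm{cof}(\sigma):\tau+\det\tau$ holds, where $\mathrm{cof}(\sigma)$ is the cofactor matrix; moreover, since $u\in C^3(\overline{\Omega})$ is strictly convex, $\mathrm{cof}(D^2u)$ is uniformly positive definite on $\overline{\Omega}$.

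First I would analyze the linearized mixed problem: given $G\in L^2(\Omega)$, find $(\tilde u_h,\tilde\sigma_h)\in V_h\times\Sigma_h$ with $\tilde u_h=g_h$ on $\partial\Omega$ such that $(\tilde\sigma_h,\tau_h)+(\nabla\cdot\tau_h,D\tilde u_h)-\langle D\tilde u_h,\tau_h n\rangle=0$ for all $\tau_h\in\Sigma_h$ and $(\mathrm{cof}(D^2u):\tilde\sigma_h,v_h)=(G,v_h)$ for all $v_h\in V_h$. This is a discrete saddle-point system whose well-posedness rests on (i) the uniform positive-definiteness of $\mathrm{cof}(D^2u)$, giving coercivity of $(\mathrm{cof}(D^2u)\,\cdot\,,\,\cdot\,)$ on the relevant kernel, and (ii) the discrete inf-sup condition for the pair $(\Sigma_h,V_h)$ established in \cite{GL:2014} (which uses the global inverse estimates quoted there, hence quasi-uniformity). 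Combining these with the interpolation error of $(I_hu,I_h\sigma)$ yields a stability bound of the form $\|\tilde u_h-I_hu\|_{H^1}+h\,\|\tilde\sigma_h-I_h\sigma\|_{L^2}\lesssim h^{k}+\|G-f\|_{L^2}$.

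Next I would define $\Phi:\mathcal{B}_h(\rho)\to V_h\times\Sigma_h$ by $\Phi(w_h,\eta_h):=(\tilde u_h,\tilde\sigma_h)$, the solution of the linearized problem above with $G:=f-\det(\eta_h-D^2u)$; by the expansion of the determinant and $\det(D^2u)=f$, a fixed point of $\Phi$ is exactly a solution of \eqref{m11}. It then remains to check that $\Phi$ maps $\mathcal{B}_h(\rho)$ into itself and is a contraction. For the self-mapping property, the stability bound gives $\|\tilde u_h-I_hu\|_{H^1}+h\|\tilde\sigma_h-I_h\sigma\|_{L^2}\lesssim h^k+\|\det(\eta_h-D^2u)\|_{L^2}$; writing $\eta_h-D^2u=(\eta_h-I_h\sigma)+(I_h\sigma-\sigma)$ and using an $L^\infty$--$L^2$ H\"older split together with the inverse estimate $\|\zeta_h\|_{L^\infty(\Omega)}\lesssim h^{-1}\|\zeta_h\|_{L^2(\Omega)}$ on $\Sigma_h$ and $\|\eta_h-I_h\sigma\|_{L^2}\le h^{-1}\rho$, one bounds the quadratic term by $C\,h^{2k-3}$, which is $\lesssim h^k$ precisely because $k\ge 3$; choosing $\rho=C_\ast h^k$ with $C_\ast$ large enough closes the argument. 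Contraction follows similarly: $\det(\eta_h-D^2u)-\det(\eta_h'-D^2u)$ is bilinear in the differences, hence controlled by $(\|\eta_h-D^2u\|_{L^\infty}+\|\eta_h'-D^2u\|_{L^\infty})\,\|\eta_h-\eta_h'\|_{L^2}\lesssim h^{k-2}\,\|\eta_h-\eta_h'\|_{L^2}$, so that after dividing by $h$ the Lipschitz constant of $\Phi$ on $\mathcal{B}_h(\rho)$ is $\lesssim h^{k-3}<1$ for $h$ small. The Banach fixed-point theorem then yields the unique $(u_h,\sigma_h)\in\mathcal{B}_h(\rho)$; since membership in $\mathcal{B}_h(C_\ast h^k)$ means $\|u_h-I_hu\|_{H^1}\le C_\ast h^k$ and $\|\sigma_h-I_h\sigma\|_{L^2}\le C_\ast h^{k-1}$, the two asserted estimates follow by adding the interpolation errors $\|u-I_hu\|_{H^1}\lesssim h^k|u|_{H^{k+1}}$ and $\|\sigma-I_h\sigma\|_{L^2}\lesssim h^k|\sigma|_{H^k}$. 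The main obstacle is the balance in the self-mapping/contraction step: the quadratic determinant term has to be absorbed using inverse estimates, and it is exactly the threshold $k\ge 3$ that makes the powers of $h$ work out; keeping careful track of which norms control which quantities (including the boundary term $\langle D\tilde u_h,\tau_h n\rangle$ in the first equation) is the delicate part.
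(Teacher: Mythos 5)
First, be aware that the paper itself contains no proof of this statement: the theorem is quoted from \cite{GL:2014}, so your proposal can only be compared with the argument of that reference. Your overall strategy --- realizing \eqref{m11} as the fixed point of a frozen-coefficient linearized mixed problem on the ball $\mathcal{B}_h(\rho)$ with $\rho\sim h^k$, exploiting that $\det$ is an exact quadratic in dimension two and that $\mathrm{cof}(D^2u)$ is uniformly positive definite for a strictly convex $C^3$ solution, then concluding with Banach's fixed-point theorem and interpolation estimates --- is indeed the strategy of \cite{GL:2014}, and the final passage from membership in the ball to the two asserted error bounds is correct.

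Two steps, however, do not close as written. (i) Your fixed-point map has the wrong fixed points. With the linearized equation posed as $(\mathrm{cof}(D^2u):\tilde\sigma_h,v_h)=(G,v_h)$ and $G=f-\det(\eta_h-D^2u)$, a fixed point $\sigma_h=\eta_h$ satisfies, by the expansion $\det\eta_h=\det(D^2u)+\mathrm{cof}(D^2u):(\eta_h-D^2u)+\det(\eta_h-D^2u)$ together with the $2\times2$ identity $\mathrm{cof}(M):M=2\det M$, the relation $(\det\sigma_h,v_h)=0$ rather than $(\det\sigma_h,v_h)=(f,v_h)$. You must either take $G=2f-\det(\eta_h-D^2u)$ or pose the linearized second equation for the increment $\tilde\sigma_h-D^2u$; this is a bookkeeping slip, but as stated the fixed points do not solve \eqref{m11}. (ii) More substantively, the contraction (and self-mapping) step does not cover the admissible endpoint $k=3$. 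Your Lipschitz bound in the scaled metric is $C\,h^{k-3}$, where $C$ carries the stability constant $C_{\mathrm{stab}}$ of the linearized problem, the inverse-estimate constant, and the ball-radius constant $C_\ast$; for $k=3$ this is an $O(1)$ quantity, so ``$<1$ for $h$ small'' does not follow. The same issue appears in the self-mapping step, where the quadratic term contributes $C_{\mathrm{stab}}C_\ast^2h^{2k-3}$, which at $k=3$ must be absorbed into $C_\ast h^3$, forcing $C_{\mathrm{stab}}C_\ast$ small, while consistency forces $C_\ast$ large. Resolving this tension at $k=3$ requires an extra argument (a sharper treatment of the quadratic remainder or of the norms in which it is measured) and is precisely the delicate point of \cite{GL:2014}; for $k\ge 4$ your argument goes through once (i) is repaired.
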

\section{A-posteriori error analysis}\label{sec:R2}
In order to  solve Monge-Amp\`ere problem by efficient adaptive finite element methods, reliable and efficient 
a posteriori error analysis is important to provide appropriated indicators. 
In this section, we first define the local and global indicators and then the lower and upper error  bounds are derived.
\subsection{Residual Error Estimators}
The general philosophy of residual error estimators is to estimate an appropriate norm of the correct residual by terms that 
can be  evaluated easier, and that involve the data at hand.

Let $\textbf{U}=(\sigma,u,\lambda)$ and  $\textbf{W}=(\tau,v,\mu)$. We define the operator $\textbf{B}$ by
\begin{equation*}
\langle\textbf{B}(\textbf{U}),\textbf{W}\rangle:=(\sigma,\tau)_{\Omega} + (\nabla\cdot \tau, D u)_{\Omega} 
- < D u, \tau n >_{\partial \Omega} -(\det \sigma,v)_{\Omega} + \langle \lambda, \mu \rangle _{\partial \Omega},
\end{equation*}
and 
\begin{equation*}
\langle \mathcal{F}, \textbf{W}\rangle:=(-f,v)_{\Omega}+ (g,\mu)_{\partial \Omega}.
\end{equation*}

We also define  by $\mathbf{H}= H^1 (\Omega)^{2 \times 2}\times H^2(\Omega) \times  H^{1/2}(\partial \Omega)$ and
$\textbf{M}=H^1 (\Omega)^{2 \times 2}\times H^1(\Omega) \times  H^{1/2}(\partial \Omega)$.
Then the  continuous problem \eqref{m100} is equivalent to :
find $\textbf{U}\in \mathbf{H}$, such that 
\begin{equation}
\langle\textbf{B}(\textbf{U}),\textbf{W}\rangle= \langle\mathcal{F}, \textbf{W}\rangle,\quad 
\forall \,\, \textbf{W}\in \textbf{M}
\end{equation}
We define the discrete version by the same way.

Then let, 
$\textbf{U}_h=(\sigma_h,u_h,\lambda_h)$ and  $\textbf{W}=(\tau,v,\mu)$. We define  by:
\begin{eqnarray}\nonumber
\langle\textbf{B}(\textbf{U}_h),\textbf{W}\rangle&=&(\sigma_h,\mu)_{\Omega} + (\nabla\cdot \mu, D u_h)_{\Omega} - 
<D u_h, \mu n >_{\partial \Omega} \\
&+&(\det \sigma_h,v)+ \langle \lambda_h, \mu \rangle _{\partial \Omega}
\end{eqnarray}
and
\begin{equation}
\langle\mathcal{F}, \textbf{W}\rangle:=(f,v)_{\Omega}+  (g_h,\mu)_{\partial \Omega}.
\end{equation}

We also define by $ \textbf{H}_h= V_h\times \Sigma_h \times L_h$. 
The discrete  problem \eqref{m111} is 
equivalent \\to :
find $\textbf{U}_h\in \textbf{H}_h$, such that, 
\begin{equation}
\langle\textbf{B}(\textbf{U}_h),\textbf{W}\rangle= \langle\mathcal{F}, \textbf{W}\rangle,
\quad \forall \,\,\textbf{W}  \in  \textbf{H}_h.
\end{equation}
We recall the following Lemma 
\begin{lem}(cf. \cite[Section 3.1]{GL:2014})\label{derivatedetermineant}
	\textbf{: Fr\'echet derivate of the determinant}. For $F(v)= \det D^2v$, we have $F'(v)(u)=(cof D^2v):D^2u$.
\end{lem}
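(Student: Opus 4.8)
The plan is to reduce the statement to an elementary computation with $2\times 2$ matrices, since in $\R^2$ the Hessian $D^2 v$ is a symmetric $2\times 2$ matrix and $F(v)=\det D^2 v$ is nothing but a quadratic polynomial in its entries. Writing $D^2 v=\begin{pmatrix}\partial_{11}v & \partial_{12}v\\ \partial_{12}v & \partial_{22}v\end{pmatrix}$, one has $F(v)=\partial_{11}v\,\partial_{22}v-(\partial_{12}v)^2$. First I would note that the map $v\mapsto D^2 v$ is linear and bounded (from $H^2(\Om)$, or from $C^2(\overline{\Om})$, into the corresponding space of symmetric matrix fields), so by the chain rule it suffices to compute the Fr\'echet derivative of the determinant $\det:\R^{2\times2}\to\R$ at the point $A=D^2 v$ and then evaluate it on $H=D^2 u$.

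For the determinant itself I would expand $\det(A+tH)$ in powers of $t$: the constant term is $\det A$, the coefficient of $t$ is $A_{11}H_{22}+A_{22}H_{11}-A_{12}H_{21}-A_{21}H_{12}$, and the remaining term is $t^2\det H$. Hence the derivative is the linear map $H\mapsto A_{11}H_{22}+A_{22}H_{11}-A_{12}H_{21}-A_{21}H_{12}$, which is precisely $(\mathrm{cof}\,A):H$, because the cofactor matrix of $A=\begin{pmatrix}a & b\\ c & d\end{pmatrix}$ is $\begin{pmatrix}d & -c\\ -b & a\end{pmatrix}$ and the Frobenius product $:$ sums the entrywise products. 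Equivalently, one may invoke Jacobi's formula $\frac{d}{dt}\det A(t)=\mathrm{tr}\big(\mathrm{adj}(A(t))\,A'(t)\big)$ together with the identity $\mathrm{tr}(\mathrm{adj}(A)H)=(\mathrm{cof}\,A):H$; either route yields the same expression.

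Finally I would assemble the pieces: taking $A=D^2 v$, $H=D^2 u$ (both symmetric, so $A_{12}=A_{21}$ and $H_{12}=H_{21}$) and combining with the chain rule gives $F'(v)(u)=D\det(D^2 v)(D^2 u)=(\mathrm{cof}\,D^2 v):D^2 u$, which is the claim; concretely the right-hand side equals $\partial_{22}v\,\partial_{11}u+\partial_{11}v\,\partial_{22}u-2\,\partial_{12}v\,\partial_{12}u$. I do not expect a genuine obstacle here: since $F$ is a polynomial, hence $C^\infty$, in the second derivatives, no delicate analysis is required to pass from G\^ateaux to Fr\'echet differentiability, and the only point deserving a word of care is the boundedness of $v\mapsto D^2 v$ on the function space under consideration, which guarantees that the composition is Fr\'echet differentiable with exactly the stated derivative. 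The substance of the lemma is thus the algebraic formula for the differential of the determinant, and the rest is routine bookkeeping.
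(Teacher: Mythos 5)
Your proof is correct. Note that the paper does not actually prove this lemma: it is stated with a citation to Section~3.1 of the reference by Awanou and Li and used as a recalled fact, so there is no in-paper argument to compare against. Your route --- expanding $\det(A+tH)$ for $2\times 2$ matrices, reading off the linear term as $(\mathrm{cof}\,A):H$ (equivalently, Jacobi's formula), and composing with the bounded linear map $v\mapsto D^2v$ via the chain rule --- is the standard one and is exactly what the cited reference does; the algebra ($A_{11}H_{22}+A_{22}H_{11}-A_{12}H_{21}-A_{21}H_{12}$ as the first-order term, and the explicit form $\partial_{22}v\,\partial_{11}u+\partial_{11}v\,\partial_{22}u-2\,\partial_{12}v\,\partial_{12}u$ after symmetrizing) checks out. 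Your remark that Fr\'echet (not merely G\^ateaux) differentiability is automatic because the map is polynomial in the entries of $D^2v$, provided $v\mapsto D^2v$ is bounded on the chosen space, is the one point that genuinely needed saying, and you said it.
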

By using the Lemma \ref{derivatedetermineant}, we have
the operator  $\mathbf{B}$ is differentiable and for all $\textbf{U}=(\sigma,u,\lambda)\in\textbf{H}$,  $\textbf{W}=(\tau,v,\mu)\in\textbf{M}$ and
$\textbf{V}=(\tau',v',\mu')\in\textbf{M}$, its  differential at $\textbf{V}=(\tau',v',\mu')$  is given 
by:
\begin{eqnarray}\nonumber
\langle D_{\mathbf{V}}\mathbf{B}(\mathbf{U}),\mathbf{W}\rangle &=&  
(\sigma,\tau)_{\Omega} + (\nabla\cdot \tau, D u)_{\Omega} 
- < D u, \tau n >_{\partial \Omega} \\
&+&(cof D^2v':D^2u,v) + \langle \lambda, \mu \rangle _{\partial \Omega}.
\end{eqnarray}
We deduce the existence of a positive constant $C_m$ , independent  of $\xi\in\textbf{H}$
 and the continuous and discrete solutions, such that the following global inf-sup condition holds:
\begin{eqnarray}\label{infsup}
C_m \lVert \mathbf{\xi} \lVert_{\textbf{H}} &\leq& \sup_{\mathbf{W}\in \mathbf{M}-\{\textbf{0}\} }
\frac{|\left<D\textbf{B}_{\textbf{V}}(\mathbf{\xi}),\textbf{W}\right>|}
{\|\mathbf{W} \|_{\textbf{M}}}
\end{eqnarray}

We have the following lemma:
\begin{lem}\label{firstestimate}
	There holds 
	\begin{eqnarray}
	\lVert \textbf{U}- \textbf{U}_h\lVert_{\textbf{H}} &\leq& C \lVert R \lVert_{\textbf{H}'}+\lVert f-\det \sigma_h-A:D^2u_h \lVert_{L^2(\Omega)}.
	\end{eqnarray}
	Where  $R$  is the residual functional define by $R(\textbf{W})= [\mathcal{F}-\mathbf{B}(\mathbf{U}_h), \mathbf{W}]$
	$\quad \forall \textbf{W} \in \mathbf{M}$,
	which satisfies: $$R(\mathbf{W}_h)=0\quad \forall \mathbf{W}_h \in \textbf{H}_h .$$
\end{lem}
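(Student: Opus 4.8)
The plan is to combine the global inf-sup condition \eqref{infsup} with a consistency argument relating the nonlinear operator $\mathbf{B}$ to its Fréchet derivative $D_{\mathbf{V}}\mathbf{B}$. First I would set $\boldsymbol{\xi} = \mathbf{U}-\mathbf{U}_h \in \mathbf{H}$ and apply \eqref{infsup} to $\boldsymbol{\xi}$, so that
\begin{equation*}
C_m \lVert \mathbf{U}-\mathbf{U}_h \rVert_{\mathbf{H}} \le \sup_{\mathbf{W}\in\mathbf{M}-\{\mathbf{0}\}} \frac{\lvert \langle D_{\mathbf{V}}\mathbf{B}(\mathbf{U}-\mathbf{U}_h),\mathbf{W}\rangle \rvert}{\lVert \mathbf{W}\rVert_{\mathbf{M}}}.
\end{equation*}
The key algebraic step is then to rewrite the numerator. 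Because $\mathbf{B}$ is affine in all arguments except the determinant term $(\det\sigma,v)_\Omega$, the difference $\langle \mathbf{B}(\mathbf{U}),\mathbf{W}\rangle - \langle \mathbf{B}(\mathbf{U}_h),\mathbf{W}\rangle$ agrees with the linearized expression $\langle D_{\mathbf{V}}\mathbf{B}(\mathbf{U}-\mathbf{U}_h),\mathbf{W}\rangle$ up to the mismatch between $(\det\sigma - \det\sigma_h, v)_\Omega$ and the linearized Hessian term $(A:D^2u_h, v)_\Omega$ appearing in the statement (here $A$ is the cofactor matrix from Lemma \ref{derivatedetermineant}, evaluated at the appropriate point). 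I would make this identification explicit, writing
\begin{equation*}
\langle D_{\mathbf{V}}\mathbf{B}(\mathbf{U}-\mathbf{U}_h),\mathbf{W}\rangle = \langle \mathbf{B}(\mathbf{U}),\mathbf{W}\rangle - \langle \mathbf{B}(\mathbf{U}_h),\mathbf{W}\rangle + \big( \det\sigma - \det\sigma_h - A:D^2u_h,\, v \big)_\Omega .
\end{equation*}

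Next I would use the two equivalent variational formulations: $\langle \mathbf{B}(\mathbf{U}),\mathbf{W}\rangle = \langle \mathcal{F},\mathbf{W}\rangle$ for all $\mathbf{W}\in\mathbf{M}$, and the definition of the residual $R(\mathbf{W}) = \langle \mathcal{F},\mathbf{W}\rangle - \langle \mathbf{B}(\mathbf{U}_h),\mathbf{W}\rangle$. This turns the first two terms on the right into $R(\mathbf{W})$, giving
\begin{equation*}
\langle D_{\mathbf{V}}\mathbf{B}(\mathbf{U}-\mathbf{U}_h),\mathbf{W}\rangle = R(\mathbf{W}) + \big( \det\sigma - \det\sigma_h - A:D^2u_h,\, v \big)_\Omega,
\end{equation*}
and since $\det\sigma = \det D^2 u = f$ for the exact solution, the second term is exactly $(f - \det\sigma_h - A:D^2u_h, v)_\Omega$. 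Then I bound $\lvert R(\mathbf{W})\rvert \le \lVert R\rVert_{\mathbf{H}'}\lVert \mathbf{W}\rVert_{\mathbf{M}}$ by definition of the dual norm, and bound $\lvert (f-\det\sigma_h - A:D^2u_h, v)_\Omega\rvert \le \lVert f-\det\sigma_h - A:D^2u_h\rVert_{L^2(\Omega)}\lVert v\rVert_{L^2(\Omega)} \le \lVert f-\det\sigma_h-A:D^2u_h\rVert_{L^2(\Omega)}\lVert \mathbf{W}\rVert_{\mathbf{M}}$ by Cauchy-Schwarz. Dividing by $\lVert\mathbf{W}\rVert_{\mathbf{M}}$, taking the supremum, and setting $C = 1/C_m$ yields the claimed bound.

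The main obstacle I anticipate is the precise bookkeeping in the consistency step: one must pin down at which point the cofactor matrix $A$ is evaluated (it comes from a Taylor/mean-value expansion of $\sigma\mapsto\det\sigma$ around $\sigma_h$, or from the linearization point $\mathbf{V}$ built into $D_{\mathbf{V}}\mathbf{B}$), and verify that the remainder after subtracting the linear term is genuinely the $L^2$ quantity written in the statement rather than something involving higher derivatives of $u-u_h$. Since the determinant in two dimensions is a quadratic form in the entries of $\sigma$, the expansion $\det\sigma - \det\sigma_h = (\mathrm{cof}\,\sigma_h):(\sigma-\sigma_h) + \det(\sigma-\sigma_h)$ is exact, which should make this manageable; the only care needed is to reconcile the $\sigma$-variable cofactor with the $D^2u_h$-form $A:D^2u_h$ used in the statement, using that $\sigma_h$ approximates $D^2 u_h$ only weakly through the first equation of \eqref{m111}. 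Everything else is a routine application of duality and Cauchy-Schwarz.
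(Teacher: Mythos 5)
Your proposal follows essentially the same route as the paper's own proof: apply the global inf-sup condition \eqref{infsup} to $\xi=\mathbf{U}-\mathbf{U}_h$, rewrite the linearized form at the point $\mathbf{V}=\mathbf{U}$ (so that $A=\mathrm{cof}\,D^2u$) as $R(\mathbf{W})+(f-\det\sigma_h-A:D^2u_h,v)_\Omega$ using $\langle\mathbf{B}(\mathbf{U}),\mathbf{W}\rangle=\langle\mathcal{F},\mathbf{W}\rangle$ and $f=\det\sigma$, and conclude by Cauchy--Schwarz and duality. The bookkeeping issues you flag (where $A$ is evaluated, the quadratic remainder of the determinant) are genuine loosenesses that the paper itself does not resolve, but they do not make your route different from the paper's.
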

\begin{proof}
	Let $\textbf{U}=(\sigma,u,\lambda)$,  $\textbf{W}=(\tau,v,\mu)$ and
	$\textbf{V}=(\tau',v',\mu')$, we have
	\begin{eqnarray*}
	\langle D_{\mathbf{V}}\mathbf{B}(\mathbf{U}),\mathbf{W}\rangle& = & 
	(\sigma,\tau)_{\Omega} + (\nabla\cdot \tau, D u)_{\Omega} \\
	&-& <D u, \tau n >_{\partial \Omega} -(cof D^2v':D^2u,v)
	+ \langle \lambda, \mu \rangle _{\partial \Omega}
	\end{eqnarray*}
and
	\begin{eqnarray*}
	\langle D_{\mathbf{V}}\mathbf{B}(\mathbf{U}-\textbf{U}_h),\mathbf{W}\rangle &=& 
	(\sigma,\tau)_{\Omega} + (\nabla\cdot \tau, D u)_{\Omega} \\
	&-& <D u, \tau n >_{\partial \Omega} -(cof D^2v':D^2u,v)\\ 
	&+& \langle \lambda, \mu \rangle _{\partial \Omega}\\ 
	&-& 
	[(\sigma_h,\tau)_{\Omega} + (\nabla\cdot \tau, D u_h)_{\Omega} \\
	&-& < D u_h, \tau n >_{\partial \Omega} +(cof D^2v':D^2u_h,v) \\
	&+& \langle \lambda_h, \mu \rangle_{\partial \Omega}] 
	\end{eqnarray*}
	Particulary for $\textbf{V}=\textbf{U}$, we have
	\begin{eqnarray*}
	\langle D_{\mathbf{V}}\mathbf{B}(\mathbf{U}-\textbf{U}_h),\mathbf{W}\rangle& = & \langle \mathcal{F},\mathbf{W} \rangle
	-[(\sigma_h,\tau)_{\Omega} \\
	&+&
	 (\nabla\cdot \tau, D u_h)_{\Omega} 
	- < D u_h, \tau n >_{\partial \Omega} +(A:D^2u_h,v) \\
	& +& (\det \sigma_h,v)-(\det \sigma_h,v)+(\det \sigma,v)+\langle \lambda_h, \mu \rangle _{\partial \Omega}],
	\end{eqnarray*}
	where  $A:= cof D^2u $. Therefore, 
	\begin{eqnarray}\label{residuequation}
	\nonumber
	\langle D_{\mathbf{V}}\mathbf{B}(\mathbf{U}-\textbf{U}_h),\mathbf{W}\rangle&=&
	\langle \mathcal{F}- \mathbf{B}(\mathbf{U}_h),\mathbf{W} \rangle+(f-\det \sigma_h,v)-(A:D^2u_h,v)\\
	\nonumber
	&=&\langle \mathcal{F}- \mathbf{B}(\mathbf{U}_h),\mathbf{W} \rangle+(f-\det \sigma_h-A:D^2u_h,v) \\
	&=& R(\mathbf{W})+(f-\det \sigma_h-A:D^2u_h,v)
	\end{eqnarray}
	Using Cauchy-Schwarz inequality and the inequality \eqref{infsup}, the result follows.
\end{proof}

Now we define the residual equation:
\begin{equation}
R(\mathbf{W})=[\mathcal{F}-\textbf{B}(\mathbf{U}_h), \mathbf{W}], 
\end{equation}
hence,
\begin{eqnarray*}
R(\mathbf{W})=&=&(f,v)_{\Omega}+(g,\mu)_{\partial \Omega}- (\sigma_h,\tau)_{\Omega}-(\nabla \tau, Du_h)_{\Omega}+ \langle \tau n, Du_h\rangle\\
&-& ( A:D^2u_h, v)_{\Omega}
-(\lambda_h,\mu)_{\partial \Omega}\\
&=&(f- A:D^2u_h,v)_{\Omega}- (\sigma_h,\tau)_{\Omega}-(\nabla \tau, Du_h)_{\Omega}\\
&+&\langle \tau n, Du_h\rangle_{\partial\Omega}
+(g-\lambda_h,\mu)_{\partial \Omega}.
\end{eqnarray*}
By integrating by parts, we obtain for $\textbf{W}=(v,\tau,\mu)\in\textbf{M}$, the equation:
\begin{equation}
R(\mathbf{W})=R_1(v)+R_2(\tau)+ R_3(\mu), 
\end{equation}
where: 
\begin{eqnarray*}
R_1(v)&:=&(f-A:D^2u_h,v)_{\Omega}, \mbox{     }     v\in \mathcal{H}:=H^1(\Omega)\\
R_2(\tau)&:=&(D^2u_h-\sigma_h,\tau)_{\Omega},\mbox{     } \tau \in\Sigma:=[H^1(\Omega)]^{2\times 2}\\
R_3(\mu)&:=&(g-\lambda_h,\mu)_{\partial \Omega}, \mbox{    }\mu\in H^{1/2}(\partial\Omega)
\end{eqnarray*}
In this way, it follows that:
\begin{equation}\label{relation01}
\lVert R \lVert_{\textbf{M}'} \leq C\{  \lVert R_1 \lVert_{\mathcal{H}'}+ \lVert R_2 \lVert_{\Sigma'} + \lVert R_3 \lVert_{H^{-1/2}(\partial \Omega)} \}
\end{equation}
and hence our next purpose is to derive suitable upper bounds for each one of the terms on the right hand side of (\ref{relation01}).
We start with the following lemma, which is a direct consequence of the Cauchy-Schwarz inequality.
\begin{lem}
	There exist $C_1$ , $C_2$ and $C_3 > 0$, independent of the meshsizes, such that:
	\begin{equation}\label{estimR1}
	\lVert R_1 \lVert_{\mathcal{H}'} \leq C_1 \left\{\displaystyle \sum_{K\in \mathcal{T}_h}
	\left( \lVert f_h-A:D^2u_h \lVert_K^2+ \lVert f-f_h \lVert_K^2\right)\right\}^{1/2}
	\end{equation}
	and 
	\begin{equation}\label{estimR2}
	\lVert R_2 \lVert_{\Sigma'} \leq C_2\left\{ \displaystyle \sum_{K\in \mathcal{T}_h}
	\lVert \sigma_h-D^2u_h  \lVert_K^2\right\}^{1/2}.
	\end{equation}
	In addition there holds
	\begin{equation}\label{estimR3}
	\lVert R_3 \lVert_{H^{-1/2}(\Omega)} \leq C_3\left\{\displaystyle \sum_{E\in \cE_h(\partial\Omega)}
	{\lVert g-\lambda_h\lVert_E^2}\right\}^{1/2}.
	\end{equation}
\end{lem}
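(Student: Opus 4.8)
The plan is to treat the three functionals $R_1,R_2,R_3$ separately, in each case bounding the duality pairing by the Cauchy--Schwarz inequality in $L^2$ and then splitting the resulting $L^2$-norm additively over the elements, resp.\ boundary edges, of $\cT_h$. Since $\|\cdot\|_{\mathcal H}$ and $\|\cdot\|_{\Sigma}$ are the full $H^1$-norms, I will repeatedly use $\|v\|_{L^2(\Omega)}\le\|v\|_{H^1(\Omega)}$ and $\|\tau\|_{L^2(\Omega)}\le\|\tau\|_{H^1(\Omega)}$ to pass from an $L^2$ bound to a bound on the dual norm.

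For $R_1$ I would first fix a discrete approximation $f_h$ of $f$ --- for concreteness the $L^2(\Omega)$-orthogonal projection of $f$ onto the piecewise polynomials of degree $k-2$ on $\cT_h$ (any quasi-local projection with optimal approximation order would serve). Writing $f-A:D^2u_h=(f-f_h)+(f_h-A:D^2u_h)$, the Cauchy--Schwarz inequality gives $|R_1(v)|\le\big(\|f-f_h\|_{L^2(\Omega)}+\|f_h-A:D^2u_h\|_{L^2(\Omega)}\big)\|v\|_{L^2(\Omega)}$, and bounding $\|v\|_{L^2(\Omega)}\le\|v\|_{\mathcal H}$ controls $\|R_1\|_{\mathcal H'}$ by $\|f-f_h\|_{L^2(\Omega)}+\|f_h-A:D^2u_h\|_{L^2(\Omega)}$. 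Expanding each $L^2(\Omega)$-norm as $\sum_{K\in\cT_h}\|\cdot\|_K^2$ and using $a+b\le\sqrt2\,(a^2+b^2)^{1/2}$ yields \eqref{estimR1} with $C_1$ an absolute constant.

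For $R_2$ the argument is shorter: Cauchy--Schwarz directly gives $|R_2(\tau)|\le\|D^2u_h-\sigma_h\|_{L^2(\Omega)}\|\tau\|_{L^2(\Omega)}\le\|D^2u_h-\sigma_h\|_{L^2(\Omega)}\|\tau\|_{\Sigma}$, so $\|R_2\|_{\Sigma'}\le\|D^2u_h-\sigma_h\|_{L^2(\Omega)}$, and writing the norm as $\sum_{K\in\cT_h}\|\cdot\|_K^2$ gives \eqref{estimR2} with $C_2=1$. For $R_3$ I would invoke the continuity of the embedding $H^{1/2}(\partial\Omega)\hookrightarrow L^2(\partial\Omega)$, valid since $\partial\Omega$ is a bounded Lipschitz curve, so that $\|\mu\|_{L^2(\partial\Omega)}\lesssim\|\mu\|_{H^{1/2}(\partial\Omega)}$; then Cauchy--Schwarz on $\partial\Omega$ gives $|R_3(\mu)|\le\|g-\lambda_h\|_{L^2(\partial\Omega)}\|\mu\|_{L^2(\partial\Omega)}\lesssim\|g-\lambda_h\|_{L^2(\partial\Omega)}\|\mu\|_{H^{1/2}(\partial\Omega)}$, and decomposing $\|g-\lambda_h\|_{L^2(\partial\Omega)}^2=\sum_{E\in\cE_h(\partial\Omega)}\|g-\lambda_h\|_E^2$ delivers \eqref{estimR3}.

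Since every step is an instance of Cauchy--Schwarz, of a continuous embedding, or of additivity of the $L^2$-norm over a partition, there is no genuine obstacle here; the only point requiring some care is the choice of $f_h$, which must be made so that the oscillation term $\|f-f_h\|_{L^2(\Omega)}$ can later be shown to be of higher order --- this is why an $L^2$-projection onto piecewise polynomials, rather than an arbitrary discrete function, is the convenient choice. The constants $C_1,C_2,C_3$ so produced depend only on this numerical factor and on the embedding constant for $H^{1/2}(\partial\Omega)\hookrightarrow L^2(\partial\Omega)$, hence are independent of the mesh sizes, as claimed.
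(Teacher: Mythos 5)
Your argument is correct and is exactly the route the paper intends: the paper offers no written proof beyond declaring the lemma ``a direct consequence of the Cauchy--Schwarz inequality,'' and your sketch simply fills in the obvious details (triangle inequality to insert $f_h$, the embeddings $\|v\|_{L^2(\Omega)}\le\|v\|_{\mathcal H}$, $\|\tau\|_{L^2(\Omega)}\le\|\tau\|_{\Sigma}$ and $H^{1/2}(\partial\Omega)\hookrightarrow L^2(\partial\Omega)$, and additivity of the squared $L^2$-norm over elements and boundary edges). No discrepancy with the paper's approach.
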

\begin{lem}
	There exist a positive constant $C$, such that
	\begin{multline}\label{estimR}
	\lVert R \lVert_{\textbf{M}'} \leq C \left\{\left(\displaystyle \sum_{K\in \mathcal{T}_h}
	\lVert f_h-A:D^2u_h \lVert_K^2\right)^{1/2} +
	\left(\displaystyle \sum_{K\in \mathcal{T}_h}
	\lVert \sigma_h-D^2u_h  \lVert_K^2\right)^{1/2}+\right.\\
	\left.
	\left(\displaystyle \sum_{E\in \cE_h(\partial\Omega)}
	\lVert g-\lambda_h\lVert^2_{ E}\right)^{1/2}+
	\left(\displaystyle \sum_{K\in \mathcal{T}_h} \lVert f_h-f \lVert_K^2\right)^{1/2}\right\}.
	\end{multline}
\end{lem}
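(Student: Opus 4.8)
The plan is to obtain \eqref{estimR} as an immediate consequence of the block decomposition $R(\mathbf{W}) = R_1(v) + R_2(\tau) + R_3(\mu)$ established just above, combined with the component bounds \eqref{estimR1}, \eqref{estimR2} and \eqref{estimR3}. First I would recall that, by definition of the dual norm and linearity of $R$ in $\mathbf{W}=(\tau,v,\mu)$, splitting the test function into its three blocks and applying the triangle inequality yields exactly \eqref{relation01}, i.e. $\lVert R\rVert_{\textbf{M}'}\le C\{\lVert R_1\rVert_{\mathcal{H}'}+\lVert R_2\rVert_{\Sigma'}+\lVert R_3\rVert_{H^{-1/2}(\partial\Omega)}\}$, where $C$ depends only on the constants of the norm equivalence relating $\lVert\mathbf{W}\rVert_{\textbf{M}}$ to $(\lVert v\rVert_{\mathcal H}^2+\lVert\tau\rVert_{\Sigma}^2+\lVert\mu\rVert_{H^{1/2}(\partial\Omega)}^2)^{1/2}$, and is therefore independent of the meshsize.

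Second, I would substitute the three bounds of the preceding lemma into \eqref{relation01}. The terms $\lVert R_2\rVert_{\Sigma'}$ and $\lVert R_3\rVert_{H^{-1/2}(\partial\Omega)}$ contribute directly the second and third summands on the right-hand side of \eqref{estimR}. For $\lVert R_1\rVert_{\mathcal H'}$, the estimate \eqref{estimR1} gives $C_1\big(\sum_{K}(\lVert f_h-A:D^2u_h\rVert_K^2+\lVert f-f_h\rVert_K^2)\big)^{1/2}$, and here I would invoke the elementary subadditivity $(a+b)^{1/2}\le a^{1/2}+b^{1/2}$ for $a,b\ge 0$, applied to the two sums, to split this into the first and the fourth summands of \eqref{estimR}. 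Collecting the four pieces and setting $C:=C\max\{C_1,C_2,C_3\}$, which is still mesh-independent, gives the claimed inequality.

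The argument is essentially bookkeeping, so I do not expect a genuine obstacle; the only points needing a little care are (i) verifying that the constant produced by the block splitting \eqref{relation01} hides no mesh-dependent factor — this is exactly where one uses that $\textbf{M}$ carries the product norm, so that the projections onto its three factors are bounded uniformly — and (ii) performing the split of the $R_1$ bound with the square-root subadditivity rather than a Cauchy--Schwarz step (the latter would merely introduce a harmless $\sqrt 2$ absorbed into $C$). Both are routine, and the estimate follows, which in turn feeds the first term of the bound in Lemma \ref{firstestimate}.
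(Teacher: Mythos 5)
Your proposal is correct and follows essentially the same route as the paper: combine \eqref{relation01} with the bounds \eqref{estimR1}--\eqref{estimR3} and split the $R_1$ term into its two summands (the paper cites Cauchy--Schwarz for this last step, which is equivalent to your square-root subadditivity up to a harmless constant). No gap.
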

\begin{proof}
	By using \eqref{relation01},\eqref{estimR1},\eqref{estimR2},\eqref{estimR3} and Cauchy-Schwarz inequality,
	the result follows.
\end{proof}
\subsubsection{A posteriori error indicators}
Now, we define the error indicators:
\begin{dfn}[\textbf{A-posteriori error indicators}]\label{definitionindicator}
	Let $\text{U}_h=(\sigma_h,u_h,\lambda_h)\in\textbf{H}_h$ be the finite element solution. Then,
	the residual error estimator is locally defined by:
	\begin{eqnarray}\nonumber
		\Theta_K^2 (\textbf{U}_h)&:=&\lVert f_h-A:D^2u_h \lVert_K^2+\lVert \sigma_h-D^2u_h  \lVert_K^2\\
		\label{localindicator}
		&+& \lVert f_h-\det \sigma_h-A:D^2u_h\lVert_K^2\\\nonumber
		&+&\displaystyle\sum_{E\in\cE_h(K\cap\partial \Omega)}	\lVert g_h-\lambda_h\lVert^2_{E}.
	\end{eqnarray}
	
\end{dfn}
The global residual error estimator is given by: 
\begin{eqnarray}\label{globalindicator}
\Theta (\textbf{U}_h)&:=&\left(\sum_{K\in\mathcal{T}_h}\Theta_K^2 (\textbf{U}_h)\right)^{1/2}.
\end{eqnarray}
Furthermore denote the local and global approximation terms by:
\begin{eqnarray*}
	\zeta_{K}^2:= \lVert f-f_h \lVert_K^2+\displaystyle\sum_{E\in\cE_h(K\cap\partial \Omega)}\lVert g-g_h \lVert_E^2,
 \end{eqnarray*}
and 
\begin{eqnarray*}
	\zeta:=\left(\sum_{K\in\mathcal{T}_h}\zeta_K^2\right)^{1/2}.
\end{eqnarray*}

\subsubsection{Reliability of $\Theta$}
The first main result is given by the following theorem. 
\begin{thm}[\textbf{Reliability of the a posteriori error estimator}] Let $\textbf{U}=(\sigma,u,\lambda)\in\textbf{H}$ be the exact solution and $\textbf{U}_h=(\sigma_h,u_h,\lambda_h)\in\textbf{H}_h$ be the finite element solution. Then,
	there exist a positive constant $C$ such that:
	\begin{eqnarray}
		\lVert \mathbf{U}-\mathbf{U}_h \lVert_{\textbf{H}} &\leq& C\left[\Theta (\textbf{U}_h)+\zeta\right].
	\end{eqnarray}
\end{thm}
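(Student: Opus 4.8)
The plan is to reduce everything to results already available in the excerpt: the abstract estimate of Lemma~\ref{firstestimate}, which rests on the global inf-sup condition \eqref{infsup}, together with the residual bound \eqref{estimR}, and then to absorb the remaining terms by elementary triangle inequalities (data oscillation).

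First I would reconcile the two residual norms appearing in the statements. Since $H^2(\Omega)\hookrightarrow H^1(\Omega)$ continuously, one has $\textbf{H}\subset\textbf{M}$ with $\|\textbf{W}\|_{\textbf{M}}\le\|\textbf{W}\|_{\textbf{H}}$ for every $\textbf{W}\in\textbf{H}$, hence
\[
\|R\|_{\textbf{H}'}=\sup_{\textbf{W}\in\textbf{H}\setminus\{\textbf{0}\}}\frac{|R(\textbf{W})|}{\|\textbf{W}\|_{\textbf{H}}}
\le\sup_{\textbf{W}\in\textbf{M}\setminus\{\textbf{0}\}}\frac{|R(\textbf{W})|}{\|\textbf{W}\|_{\textbf{M}}}=\|R\|_{\textbf{M}'}.
\]
Combining this with Lemma~\ref{firstestimate} yields
\[
\|\textbf{U}-\textbf{U}_h\|_{\textbf{H}}\le C\,\|R\|_{\textbf{M}'}+\|f-\det\sigma_h-A:D^2u_h\|_{L^2(\Omega)},
\]
so it suffices to bound the two terms on the right-hand side by $C[\Theta(\textbf{U}_h)+\zeta]$.

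For the first term I would simply invoke the already-established inequality \eqref{estimR}. Its right-hand side contains the quantities $\|f_h-A:D^2u_h\|_K$, $\|\sigma_h-D^2u_h\|_K$, which are ingredients of $\Theta_K$, and $\|f_h-f\|_K$, which is an ingredient of $\zeta_K$; the only term needing attention is the boundary contribution $\|g-\lambda_h\|_E$. Since $g_h=I_hg$ is the computable datum entering $\Theta_K$, I would split by the triangle inequality $\|g-\lambda_h\|_E\le\|g-g_h\|_E+\|g_h-\lambda_h\|_E$, so that $\big(\sum_{E\in\cE_h(\partial\Omega)}\|g-\lambda_h\|_E^2\big)^{1/2}\lesssim\Theta(\textbf{U}_h)+\zeta$. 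For the leftover $L^2$ term a second data-oscillation splitting does the job: $f-\det\sigma_h-A:D^2u_h=(f-f_h)+(f_h-\det\sigma_h-A:D^2u_h)$, and after squaring and summing over $K\in\cT_h$ the first piece contributes to $\zeta$ and the second is exactly the third summand of $\Theta_K$. Collecting the four groups of terms and recalling \eqref{localindicator}, \eqref{globalindicator} and the definition of $\zeta$ — all constants depending only on the shape regularity of $\cT_h$ — gives the claimed bound.

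I do not expect a genuine obstacle here: the essential analytic content (turning control of the residual into control of the error) is already packaged in Lemma~\ref{firstestimate} via the global inf-sup condition \eqref{infsup} for the linearized operator $D\textbf{B}_{\textbf{V}}$, and the rest is bookkeeping. The only subtle points worth flagging in the write-up are the passage $\|R\|_{\textbf{H}'}\le\|R\|_{\textbf{M}'}$ made above, and the consistent use of the two oscillation splittings $g-\lambda_h=(g-g_h)+(g_h-\lambda_h)$ and $f-(\cdots)=(f-f_h)+(f_h-(\cdots))$ so that every non-computable datum is moved into $\zeta$.
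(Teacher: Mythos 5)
Your proposal is correct and follows essentially the same route as the paper, whose proof is simply ``by Lemma~\ref{firstestimate} and the estimate \eqref{estimR}, the result follows.'' Your additional care --- the observation that $\lVert R\lVert_{\textbf{H}'}\leq\lVert R\lVert_{\textbf{M}'}$ via the embedding $\textbf{H}\subset\textbf{M}$, and the two oscillation splittings $g-\lambda_h=(g-g_h)+(g_h-\lambda_h)$ and $f-(\cdot)=(f-f_h)+(f_h-(\cdot))$ --- merely supplies the bookkeeping the paper leaves implicit, and does so correctly.
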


\begin{proof}
	By using Lemma \ref{firstestimate} and the estimate \eqref{estimR}, the result follows. 
\end{proof}
\subsubsection{Efficiency of $\Theta$}\label{efficiencysection}
The second main result of this paper is the efficiency of the a posteriori error estimator $\Theta$.
In order to derive the local lower bounds, 
we proceed similarly as in \cite{ca:97, HJA:2017,SGR:2016} (see also \cite{15}), by applying
inverse inequalities, and the localization technique based on simplex-bubble and face-bubble functions. To this end, we 
recall some notation and introduce further preliminary results. Given $K\in \mathcal{T}_h$, and 
$E\in \cE_h(K)$,
we let $b_K$ and $b_E$ be the usual simplexe-bubble and face-bubble 
functions respectively (see (1.5) and (1.6) in \cite{verfurth:96b}). In particular, $b_K$ satisfies 
$b_K\in \mathbb{P}^2(K)$, $supp(b_K)\subseteq K$, $b_K=0 \mbox{ on } \partial K$, and $0\leq b_K\leq 1\mbox{ on } K $.
Similarly, $b_E\in \mathbb{P}^2(K)$, $supp(b_E)\subseteq 
\omega_E:=\left\{K'\in \mathcal{T}_h:  E\in\cE_h (K')\right\}$, 
$b_E=0\mbox{  on  } \partial K\smallsetminus E$ and $0\leq b_E\leq 1\mbox{ in } \omega_E$.
We also recall from \cite{verfurth:94a} that, given $k\in\mathbb{N}$, there exists an extension operator
$L: C(E)\longrightarrow C(K)$ that satisfies $L(p)\in \mathbb{P}^k(K)$ and $L(p)_{|E}=p, \forall p\in \mathbb{P}^k(E)$.
A corresponding vectorial version of $L$, that is, the componentwise application of $L$, is denoted by 
$\textbf{L}$. Additional properties of $b_K$, $b_E$ and $L$ are collected in the following lemma (see \cite{verfurth:94a}):

\begin{lem}
	Given $k\in \mathbb{N}^*$, there exist positive constants depending only on $k$ and shape-regularity of the triangulations 
	(minimum angle condition), such that for each simplexe $K$ and $E\in \cE_h(K)$ there hold
	\begin{eqnarray}\label{cl1}
	\parallel q \parallel_{K}&\lesssim&\parallel qb_K^{1/2}\parallel_{K}\lesssim
	\parallel q\parallel_{K}, \forall q\in \mathbb{P}^k(K)\\\label{cl2}
	|q b_K|_{1,K}&\lesssim&  h_K^{-1}\parallel q \parallel_{K}, \forall q\in \mathbb{P}^k(K)\\\label{cl3}
	\parallel p\parallel_{E}&\lesssim&\parallel b_E^{1/2}p\parallel_{E}\lesssim \parallel p\parallel_{E},
	\forall p\in \mathbb{P}^k(E)\\\label{cl4}
	\parallel L(p)\parallel_{K} +h_E|L(p)|_{1,K}&\lesssim& h_E^{1/2}\parallel p\parallel_{E}
	\forall p\in \mathbb{P}^k(E)
	\end{eqnarray}
\end{lem}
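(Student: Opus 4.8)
The plan is to transport each of the four inequalities to a fixed reference configuration by an affine change of variables and then invoke the equivalence of all norms on the finite-dimensional polynomial spaces that appear. First I would fix, once and for all, a reference simplex $\hat K$, a reference edge $\hat E\subset\partial\hat K$, and the associated bubble functions $\hat b_{\hat K},\hat b_{\hat E}\in\mathbb{P}^2(\hat K)$: by construction $0\le\hat b_{\hat K},\hat b_{\hat E}\le 1$, while $\hat b_{\hat K}$ is strictly positive on the open simplex and $\hat b_{\hat E}$ on the relative interior of $\hat E$. For $K\in\cT_h$ with $E\in\cE_h(K)$, let $F_K:\hat K\to K$ be the affine bijection; the minimum-angle condition gives $\parallel DF_K\parallel\lesssim h_K$, $\parallel DF_K^{-1}\parallel\lesssim h_K^{-1}$, $|\det DF_K|\sim h_K^2$ and $h_E\sim h_K$, with constants depending only on $\sigma$. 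Writing $\hat v:=v\circ F_K$, I would record the standard scaling relations $\parallel v\parallel_K\sim h_K\parallel\hat v\parallel_{\hat K}$, $|v|_{1,K}\sim|\hat v|_{1,\hat K}$ (in $\R^2$), and $\parallel p\parallel_E\sim h_E^{1/2}\parallel\hat p\parallel_{\hat E}$, and note that $b_K,b_E$ are normalised so that $b_K\circ F_K=\hat b_{\hat K}$ and $b_E\circ F_K=\hat b_{\hat E}$.

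Next I would dispatch \eqref{cl1} and \eqref{cl3} together. In each, the right-hand inequality is immediate from $0\le\hat b\le 1$; for the left-hand one, the key point is that $q\mapsto\parallel q\hat b_{\hat K}^{1/2}\parallel_{\hat K}$ is a genuine norm on the finite-dimensional space $\mathbb{P}^k(\hat K)$ --- if it vanishes then $q\equiv 0$ on the open simplex because $\hat b_{\hat K}>0$ there, hence $q\equiv 0$ --- so it is equivalent to $\parallel\cdot\parallel_{\hat K}$ with a constant depending only on $k$ and $\hat K$; transporting back through the scaling relations gives \eqref{cl1}, and the identical argument on $\hat E$ with $\hat b_{\hat E}$ gives \eqref{cl3}. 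For \eqref{cl2} I would use that $\hat q\hat b_{\hat K}\in\mathbb{P}^{k+2}(\hat K)$ and apply the standard inverse inequality on this fixed finite-dimensional space, $|\hat q\hat b_{\hat K}|_{1,\hat K}\lesssim\parallel\hat q\hat b_{\hat K}\parallel_{\hat K}\le\parallel\hat q\parallel_{\hat K}$, then scale back using $(qb_K)\circ F_K=\hat q\hat b_{\hat K}$ to pick up the factor $h_K^{-1}$. Finally, for \eqref{cl4} I would take $L$ to be obtained from a fixed reference extension operator $\hat L:\mathbb{P}^k(\hat E)\to\mathbb{P}^k(\hat K)$ by the affine transformation, so that $L(p)\circ F_K=\hat L(\hat p)$; since $\hat L$ is a linear map between finite-dimensional spaces it is bounded, $\parallel\hat L(\hat p)\parallel_{\hat K}+|\hat L(\hat p)|_{1,\hat K}\lesssim\parallel\hat p\parallel_{\hat E}$, and scaling back with $\parallel L(p)\parallel_K\sim h_K\parallel\hat L(\hat p)\parallel_{\hat K}$, $|L(p)|_{1,K}\sim|\hat L(\hat p)|_{1,\hat K}$, $\parallel\hat p\parallel_{\hat E}\sim h_E^{-1/2}\parallel p\parallel_E$ and $h_K\sim h_E$ produces both terms of \eqref{cl4}.

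The only genuinely non-routine ingredient is this norm-equivalence step, which rests on the strict positivity of the reference bubble functions on the open simplex (respectively the relative interior of the reference edge); everything else is bookkeeping. Consequently, the main thing I expect to have to watch is the careful tracking of the powers of $h_K$ and $h_E$ under the affine maps and the verification that every hidden constant depends only on $k$ and the shape-regularity constant $\sigma$, and not on the mesh size --- which is precisely where the minimum-angle condition enters, through the bounds on $\parallel DF_K\parallel$, $\parallel DF_K^{-1}\parallel$ and $h_E\sim h_K$.
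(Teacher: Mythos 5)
Your argument is correct: the paper does not prove this lemma at all but simply quotes it from Verf\"urth's work, and the scaling-plus-norm-equivalence proof you outline is precisely the standard argument given in that reference. The only cosmetic remark is that on a triangle the element bubble is actually cubic (the product of the three barycentric coordinates), not quadratic as the paper writes, but this changes nothing in your proof since you only use $0\le \hat b\le 1$, strict positivity on the interior, and the fact that $\hat q\hat b$ lies in a fixed finite-dimensional polynomial space.
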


To this end, we recall
some notation.
We define the error respect to  $\sigma$, $u$ and $\lambda$ respectively by 
$e_{\sigma}=\sigma- \sigma_h$, $e_u=u-u_h$ and $\mathbf{e}_{\lambda}=\lambda-\lambda_h$. Then we recall the global
error  define by 
$ \lVert \mathbf{U}-\mathbf{U}_h \lVert_{\textbf{H}}:=  \left(\lVert e_{\sigma}\lVert_{1,\Omega}^2+\lVert e_u \lVert_{2,\Omega}^2 +\lVert e_{\lambda} \lVert_{1/2,\partial\Omega}^2\right)^{1/2}$. To prove local efficiency for $w\subset \Omega$, let us denote by: 
\begin{eqnarray*}
\parallel\textbf{V}\parallel_{h,w}:=\left[\displaystyle\sum_{K\subset \bar{w}}\left(
\parallel v\parallel_{2,K}^2+\parallel\tau\parallel_{0,K}^2+\displaystyle\sum_{E\in\cE_h(\bar{K})}
\parallel\mu\parallel_{0,E}^2\right)\right]^{1/2}, \mbox{ where } \textbf{V}=(\tau,v,\mu)\in\textbf{H}.
\end{eqnarray*}
The main result of this subsection can be given as follows:
\begin{thm}
	Let $f\in L^2(\Omega)$, $g\in L^2(\partial \Omega)$. Let $(\sigma, u, \lambda)$ be the unique solution
	of the continous problem and $(\sigma_h, u_h, \lambda_h)$ be the unique solution of discret  
	problem. Then, the local error estimator $\Theta_K$ satisfies :
	\begin{equation}\label{lower}
	\Theta_K \lesssim \rVert (e_{\sigma}, e_u , e_{\lambda})\rVert_{h, \tilde{w}_K}+  
	\displaystyle \sum_{K' \subset \tilde{w}_K }\zeta_K', \quad \forall K\in \mathcal{T}_h.
	\end{equation}
	Where $\tilde{w}_K$ is a finite union of neighbouring elements of $K$.
\end{thm}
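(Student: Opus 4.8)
The plan is to bound the four groups of terms that make up $\Theta_K^2(\mathbf{U}_h)$ separately, using throughout that the exact triple satisfies $\sigma=D^2u$ in $\Omega$ and $\lambda=g$ on $\partial\Omega$ (both read off from \eqref{m100} and from $u$ solving \eqref{m1}), the pointwise Euler identity $\mathrm{cof}(D^2u):D^2u=2\det(D^2u)=2f$ valid in $\R^2$, and the Fr\'echet--derivative identity \eqref{residuequation} specialized to $\mathbf V=\mathbf U$.

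The two ``gap'' contributions need no bubble functions. From $\sigma_h-D^2u_h=(\sigma_h-\sigma)-D^2(u-u_h)$ and the triangle inequality, $\|\sigma_h-D^2u_h\|_K\le\|e_\sigma\|_{0,K}+|e_u|_{2,K}\lesssim\|(e_\sigma,e_u,e_\lambda)\|_{h,K}$. For a boundary edge $E\in\cE_h(K\cap\partial\Omega)$, writing $g_h-\lambda_h=(g_h-g)+(g-\lambda)+(\lambda-\lambda_h)=(g_h-g)+e_\lambda$ gives $\|g_h-\lambda_h\|_E\le\|g-g_h\|_E+\|e_\lambda\|_E$, the first term contributing to $\zeta_K$ and the second to $\|(e_\sigma,e_u,e_\lambda)\|_{h,K}$; summing over the boundedly many such edges (shape regularity) disposes of this group.

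For the element residual terms $\|f_h-A:D^2u_h\|_K$ and $\|f_h-\det\sigma_h-A:D^2u_h\|_K$ I would use the interior bubble $b_K$ in the standard way. Let $\psi_K$ be the quantity in question with $f$ and $A=\mathrm{cof}(D^2u)$ replaced by piecewise-polynomial surrogates $f_h$, $\Pi_hA$ (the replacement errors being of order $\zeta_K$, resp.\ of higher order, using $u\in C^3(\overline\Omega)$ and interpolation of $\mathrm{cof}(D^2u)$); then $\psi_K$ is polynomial on $K$, and \eqref{cl1} gives $\|\psi_K\|_K^2\lesssim(\psi_K,\psi_Kb_K)_K$. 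The zero-extension $v:=\widetilde{\psi_Kb_K}$ belongs to $H^1_0(\Omega)\subset\mathcal{H}$, so testing the residual decomposition with it and applying \eqref{residuequation} at $\mathbf V=\mathbf U$, $\mathbf W=(0,v,0)$ identifies $(\psi_K,\psi_Kb_K)_K$, up to the oscillation already accounted for, with a sum of $(\mathrm{cof}(D^2u):D^2e_u,v)_\Omega$ and terms controlled --- after comparing $\det\sigma_h$ with $\det D^2u=f$ through the mean-value identity $\det\sigma_h-\det D^2u=\int_0^1\mathrm{cof}\big(D^2u+t(\sigma_h-D^2u)\big):(\sigma_h-D^2u)\,dt$ --- by $\|\sigma_h-D^2u_h\|_{0,K}$. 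Since $D_{\mathbf U}\mathbf B$ couples $D^2e_u$ to $v$ in $L^2$, with no derivative falling on $v$, the first term is $\le C\|\mathrm{cof}(D^2u)\|_{\infty}\,|e_u|_{2,K}\,\|\psi_Kb_K\|_{0,K}\le C\,|e_u|_{2,K}\,\|\psi_K\|_K$ with $C$ finite and $h$-independent ($u\in C^3(\overline\Omega)$); cancelling one factor of $\|\psi_K\|_K$ and invoking the previous step for $\|\sigma_h-D^2u_h\|_{0,K}$ yields $\|\psi_K\|_K\lesssim\|(e_\sigma,e_u,e_\lambda)\|_{h,K}+\zeta_K$. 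No negative power of $h_K$ appears --- the element residuals are of $L^2$, not $H^{-1}$, type --- so $b_K$ alone suffices and \eqref{cl2} is used only as a consistency check.

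Assembling the four bounds, noting that each patch involved ($K$ itself for the volume terms, $\omega_E$ for the edge terms) lies in a fixed union $\tilde{w}_K$ of neighbours of $K$ of uniformly bounded cardinality, and collecting the data into $\sum_{K'\subset\tilde{w}_K}\zeta_{K'}$, gives \eqref{lower}. The main obstacle is the third step: one must check that each quantity entering $\Theta_K$ genuinely is a \emph{linearized} residual --- i.e.\ that through \eqref{residuequation} and $\mathrm{cof}(D^2u):D^2u=2\det D^2u$ it collapses, modulo data oscillation and the already-controlled term $\|\sigma_h-D^2u_h\|_K$, onto $\mathrm{cof}(D^2u):D^2e_u$ tested against a bubble --- and, simultaneously, to keep the continuity constant of the Fr\'echet derivative $D_{\mathbf U}\mathbf B$ (which depends only on $\|\mathrm{cof}(D^2u)\|_\infty<\infty$) independent of $h$; everything else is a routine application of the inverse inequalities of the preceding lemma and the Cauchy--Schwarz inequality.
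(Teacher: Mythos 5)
Your proposal follows essentially the same route as the paper's own proof: triangle inequalities combined with the exact relations $\sigma=D^2u$ and $\lambda=g$ for the terms $\lVert\sigma_h-D^2u_h\lVert_K$ and $\lVert g_h-\lambda_h\lVert_E$, the element bubble $b_K$ together with $\det D^2u=f$ for $\lVert f_h-A:D^2u_h\lVert_K$, and the linearized residual identity \eqref{residuequation} tested with $\textbf{W}=(0,v,0)$ for $\lVert f_h-\det\sigma_h-A:D^2u_h\lVert_K$. The one substantive point of divergence is that you explicitly invoke the Euler identity $\operatorname{cof}(D^2u):D^2u=2\det D^2u$ and flag as ``the main obstacle'' the question of whether $f-A:D^2u_h$ really collapses onto $A:D^2e_u$; this caution is warranted, since the paper's displayed computation replaces $\int_K(\det\sigma)\,v_K$ by $\int_K(A:D^2u)\,v_K$ without the factor $\tfrac12$, so the difficulty you identify is genuine and is passed over silently in the paper's argument as well.
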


\begin{proof}
 To establish the lower error bound (\ref{lower}), we will make extensive
use of the original system of equations given by (\ref{m1}) and (\ref{m100}), which is
recovered from the mixed formulation (\ref{m11}) by choosing suitable test
functions and integrating by parts backwardly the corresponding
equations. Thereby, we bound each term of the residual separately.
\begin{enumerate}

	\item \textbf{Residual element} \quad  $(f_h-A:D^2u_h ):$ 
	
	Let us define by  $v_K= (f_h-A:D^2u_h)b_K$ where $b_K$ is the bubble fonction define in the Section \ref{efficiencysection}.
	We have 
	\begin{equation}
	(f_h-A:D^2u_h, v_K)_K = \int_{K} (f_h-A:D^2u_h).v_K
	\end{equation}
	Introduce $f$ and use the modified continuous formulation (\ref{m100})  to get:
	\begin{align*}
	(f_h-A:D^2u_h, v_K)_K = \int_{K} (f-f_h).v_K -\int_{K} (A:D^2u_h).v_K +  \int_{K} (\det \sigma).v_K  \\
	= \int_{K} (f-f_h).v_K + \int_{K} ((A:D^2u)-\int_{K} (A:D^2u_h)).v_K  \\ 
	= \int_{K} (f-f_h).v_K + \int_{K} A:((D^2u)-(D^2u_h)).v_K  \\ 
	\end{align*}
	Using Cauchy-Schwarz inequality we obtain
	\begin{equation}
	\rVert (f_h-A:D^2u_h) v_K^{\frac{1}{2}}\rVert \leq 
	C(\left(\rVert f-f_h \rVert_K \rVert v_K\rVert+ \rVert e_{u} \rVert_{2,K} \rVert v_K\rVert_K\right).
	\end{equation}
	Using an  inverses inequalities we have
	\begin{equation}
	\rVert f_h-A:D^2u_h \rVert_K \lesssim   \rVert e_{u} \rVert_K+ \zeta_K.
	\end{equation}
	Thus,
	\begin{eqnarray}\label{ind1}
	\rVert f_h-A:D^2u_h \rVert_K	&\lesssim&
		\rVert (e_{\sigma}, e_u , e_{\lambda})\rVert_{h, \tilde{w}_K}+
		\zeta_K.
	\end{eqnarray}
	 \item \textbf{Residual element} \quad  $(\sigma_h-D^2u_h ):$  Let $K\in \cT_h$. We have
	 \begin{eqnarray*}
	 	\sigma_h-D^2u_h&=&(\sigma_h-\sigma)+(\sigma-D^2u_h)\\
	 	&=& \sigma_h-\sigma+(D^2u-D^2u_h)\\
	 	&=& e_{\sigma}+(D^2e_u)
	 \end{eqnarray*}
 The triangular inequality leads to
 \begin{eqnarray*}
 	\parallel \sigma_h-D^2u_h\parallel_K&\lesssim& \parallel e_{\sigma}\parallel_K+\parallel e_u\parallel_{2,K}
 \end{eqnarray*}
	Hence, 
	\begin{eqnarray}\label{ind2}
		\parallel \sigma_h-D^2u_h\parallel_K&\lesssim& 
		\rVert (e_{\sigma}, e_u , e_{\lambda})\rVert_{h, \tilde{w}_K}+
		\zeta_K.
	\end{eqnarray}

	\item 
	\textbf{Residual element} \quad  $(f_h+\det \sigma_h-A:D^2u_h)$: From the  residu equation of differential form (\ref{residuequation}), we obtain for $\textbf{W}=(\tau,v,\mu)$:
	$$
	(f-\det \sigma_h-A:D^2u_h,v)=\langle D_{\textbf{U}}\textbf{B}(\textbf{U}-\textbf{U}_h),\textbf{W}\rangle-R(\textbf{W})
	$$
	and we deduce for $\textbf{W}=(0,v,0)$:
	\begin{eqnarray*}
	(f_h-\det \sigma_h-A:D^2u_h,v)_K&=&-(A:D^2 u,v)+(A:D^2 u_h,v)_K-R(\textbf{W})-(f-f_h,v)_K	\\
	&=&-(A:D^2 (u-u_h),v)_K-(f-A:D^2u_h,v)_K-(f-f_h,v)_K\\
	&=&-(A:D^2 e_u,v)_K-(f_h-A:D^2u_h,v)_K-2(f-f_h,v)_K
\end{eqnarray*}
	Using Cauchy Schwarz inequality, we obtain
	\begin{eqnarray*}
		|(f_h-\det \sigma_h-A:D^2u_h,v)_K|
			&\lesssim&\left(|e_u|_{2,K}+\parallel f_h-A:D^2u_h\parallel_K+\parallel f-f_h\parallel_K\right)\parallel v\parallel_K
	\end{eqnarray*}
Hence, from (\ref{ind1}), we have,
\begin{eqnarray*}
	\displaystyle\sup_{v\in H^1(K)}\frac{|(f_h-\det \sigma_h-A:D^2u_h,v)_K|}{\parallel v\parallel_K}&\lesssim& 
	|e_u|_{2,K}+\parallel e_u\parallel_K+\zeta_K\\
	&\lesssim& \parallel e_u\parallel_{2,K}+\zeta_K.
\end{eqnarray*}
	Thus,
	\begin{eqnarray}\label{ind3}
		\parallel f_h-\det \sigma_h-A:D^2 u_h\parallel_K&\lesssim& \parallel e_u\parallel_{2,K}+\zeta_K.
	\end{eqnarray}

	\item \textbf{Residual element } $(g_h-\lambda_h)$: For each $K\in\cT_h$ and $E\in\cE_h(K\cap \partial \Omega)$, we have
	\begin{eqnarray*}
		g_h-\lambda_h&=&(g_h-\lambda)+(\lambda-\lambda_h)\\
		&=&(g_h-g)+e_{\lambda}
	\end{eqnarray*}
Thus, 
\begin{eqnarray*}
	\parallel g_h-\lambda_h\parallel_E &\leq& \parallel g_h-g\parallel_E+\parallel e_{\lambda}\parallel_E
\end{eqnarray*}
We sum up over all (boundary) faces $E\in\cE_h(K\cap \partial \Omega)$ and obtain,
\begin{eqnarray}\label{ind4}
	\displaystyle\sum_{E\in\cE_h(K\cap\partial \Omega)}\parallel g_h-\lambda_h\parallel_E &\lesssim&
	\rVert (e_{\sigma}, e_u , e_{\lambda})\rVert_{h, \tilde{w}_K}+
	 \zeta_K.
\end{eqnarray}
	\end{enumerate}
The estimates (\ref{ind1}), (\ref{ind2}), (\ref{ind3}) and (\ref{ind4}) provide the desired
local lower error bound.
\end{proof}
\section{Summary}\label{summary}
In this paper, we have proposed and rigorously analyzed a new a posteriori residual type error estimators for the Monge-Amp\`ere equation on isotropic meshes. Our investigations cover conforming
discretization in $\mathbb{R}^2$. The residual type a posteriori error estimator is provided. It is proven that the a posteriori error estimate provided in this paper is both reliable and efficient. Many issues remain to be addressed in this area, let us mention other types of a posteriori error estimators or implementation and convergence analysis of adaptive finite element methods.
Also, we intend use  the technical used by \cite{Jad:2014} where the adaptative technical was used 
for a nonlinear problem. Ideed \cite{Jad:2014} applicate   Brezzi-Rappaz-Raviart theorem and this 
permit to have the implicitly   inf-sup condition. In the same way w'll  suppose this 
implicitly   inf-sup condition for Monge-Amp\`ere problem in order to built our error indicator.

\end{Large}
\end{document}